\def \Z{\mathbb Z}
\def \init{\operatorname{init}}
\def \hn{\operatorname{hn}}
\def \HS{\operatorname{HS}}
\def \HN{\operatorname{HN}}
\def\grade{\operatorname{grade}}
\newcommand{\gf}[2]{\genfrac{}{}{0pt}{}{#1}{#2}}
\def\a{\alpha}
\newtheorem{theorem}{Theorem}[section]
\newtheorem{lemma}[theorem]{Lemma}
\newtheorem{corollary}[theorem]{Corollary}
\newtheorem{proposition}[theorem]{Proposition}
\newtheorem*{Main Theorem}{Main Theorem}
\newtheorem*{Main Corollary}{Main Corollary}
\newtheorem{def&dis}[theorem]{Definition and Discussion}
\theoremstyle{definition}
\newtheorem{definition}[theorem]{Definition}
\newtheorem*{Remark}{Remark}
\newtheorem{chunk}[theorem]{}
\newtheorem*{proof1}{Proof of Theorem \ref{Main-T}}
\newtheorem*{proof2}{Proof of Corollary \ref{Main-C}}
\numberwithin{equation}{theorem}
\begin{document}

\baselineskip=16pt

\title[The Hilbert series of the ring associated to an almost alternating matrix]{\bf The Hilbert series of the ring associated to an almost alternating matrix}
\date\today

\author[Andrew R. Kustin, Claudia Polini, and Bernd Ulrich]
{Andrew R. Kustin, Claudia Polini, and Bernd Ulrich}

\thanks{AMS 2010 {\em Mathematics Subject Classification}.
13H15,  
13C40,
13D02.  
}

\thanks{The first author was partially supported by  the Simons Foundation.
The second and third authors were partially supported by the NSF}

\thanks{Keywords: almost alternating matrix, Gorenstein ideal,
Hilbert series, multiplicity, residual intersection}

\address{Department of Mathematics, University of South Carolina,
Columbia, SC 29208} \email{kustin@math.sc.edu}

\address{Department of Mathematics, 
University of Notre Dame
Notre Dame, IN 46556} \email{cpolini@nd.edu}

\address{Department of Mathematics,
Purdue University,
West Lafayette, IN 47907}\email{ulrich@math.purdue.edu}

 \begin{abstract} We give an explicit formula for the Hilbert Series of an algebra defined by a linearly presented, standard graded, residual intersection of a grade three Gorenstein ideal.   \end{abstract}
 
\maketitle

\section{Introduction.}

Finding explicit formulas for  Hilbert series of residual  intersections is a matter of serious concern; see, for example, \cite{CEU,CEU13}.  The first paper shows that there should be formulas which express the Hilbert series of a scheme in terms of the Hilbert series of  its conormal modules. The second paper  exhibits the explicit formulas and has applications to the dimension of secant varieties. The Hilbert series of the present paper are completely explicit; there is no need to use the  Hilbert series of  conormal modules.

The notion of residual intersection (see \ref{ri}) was introduced by Artin and Nagata
\cite{AN};
it
has been improved and generalized by Huneke \cite{H83} and Huneke and
Ulrich
\cite{HU}. Residual intersections can be used to compute $j$-multiplicity \cite{xie}, the dimension of secant varieties \cite{CEU13},  complete intersection defect ideals  \cite{deFD}, Segre classes of subschemes of projective space \cite{EJP}, and Chern numbers of smooth varieties \cite{DEPS}.

  The residual intersections that we consider all arise from an almost alternating matrix.
If $n$ and $t$ are integers with $n$ positive and $t$ non-negative, then an $n\times (n+t)$ matrix $\rho$ is called {\it almost alternating} if the left-most $n$ columns of $\rho$ form an alternating matrix. We are interested in non-square almost alternating matrices. Let $R$ be a commutative Noetherian ring,  $n$ and $t$ be positive integers,  and  
 $\rho$ be an $n$ by  $(n+t)$  almost alternating matrix  with entries in $R$.
The matrix $\rho$ gives rise to an ideal   $J(\rho)$ in $R$, see (\ref{2.1}). It is shown in \cite{KU2}  that $\grade J(\rho)\le t$  and if $\grade J(\rho)=t$, then $J(\rho)$ is a perfect ideal in $R$. It is also shown in \cite{KU2} that, once minor hypotheses are imposed (see the statement of Corollary~\ref{Main-C} for the details), then  every residual intersection of a grade three Gorenstein ideal is equal to $J(\rho)$ for some almost alternating matrix $\rho$. Furthermore, a resolution $\mathcal D^0(\rho)$ of $R/J(\rho)$ by free $R$-modules is given in \cite{KU2}; this resolution is minimal whenever the data permits such a claim.

Assume that $R$ is a standard graded ring and that the entries of
$\rho$ are linear forms from $R$. In this paper we give explicit formulas for  the Hilbert series and multiplicity of  $\bar R=R/J(\rho)$. Recall that the Hilbert series of the graded ring $S=\bigoplus_{0\le i}S_i$ is the formal power series  
$$\textstyle\HS_S(z)=\sum_i \lambda_{S_0}(S_i) z^i,$$
where $\lambda_{S_0}(\underline{\phantom{x}})$ represents the length of an $S_0$-module, and the multiplicity of $S$
is $$e_S=(\dim S)!
\lim_{i\to\infty}\frac{\lambda_S(S/\mathfrak m^i S)}{i^{\dim S}},$$
where $\mathfrak m$ is the maximal homogeneous ideal of $S$ and  ``$\dim$'' represents  Krull dimension.

Theorem~\ref{Main-T} is the main result of the paper. (The ideal $J(\rho)$ is defined in (\ref{2.1}).)
\begin{theorem}\label{Main-T} Let $R_0$ be an Artinian local ring, $R=\bigoplus_{0\le i} R_i$ be a standard graded $R_0$ algebra, $n$ and $t$ be positive integers, $\rho$ be an $n\times (n+t)$ almost alternating matrix with homogeneous linear entries, $J$ be the ideal $J(\rho)$, and $\bar R$ be the quotient ring $R/J$. If ${t\le \grade J}$, then the following statements hold.
\begin{enumerate}[\rm(a)]
\item\label{Main-T-a} The Hilbert series of $\bar R$ is
$\HS_{\bar R}(z)=
\HS_R(z)\cdot (1-z)^t \cdot \hn_{\bar R}(z)$, where
\begin{equation}\label{hnRbar}\hn_{\bar R}(z)=\sum\limits_{i=0}^{n-1} \binom{t+i-1}{i}z^i -
\sum\limits_{i=\lfloor\frac{n+1}2\rfloor}^{n-1} \binom{t+2i-n-1}{2i-n}z^{i}.\end{equation}
\item\label{Main-T-b} The multiplicity, $e_{\bar R}$, of $\bar R$ is equal to  
$$e_R\cdot\sum_{i=0}^{\lfloor\frac {n-1}2\rfloor}\binom{n-2-2i+t}{t-1},$$which is also equal to
$e_R$ times the number of monomials $m$ of degree at most $n-1$ in $t$ variables with $\deg m+n$ odd.

\item\label{Main-T-c} In particular, if $R$ is a standard graded polynomial ring over a field, then the $h$-vector of $\bar R$ is the vector of coefficients of the polynomial
$\hn_{\bar R}(z)$ and the multiplicity of $\bar R$ is $$\sum_{i=0}^{\lfloor\frac {n-1}2\rfloor}\binom{n-2-2i+t}{t-1}.$$
\end{enumerate}\end{theorem}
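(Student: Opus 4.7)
The plan is to read the Hilbert series directly off the explicit free resolution $\mathcal{D}^0(\rho)$ of $\bar R$ constructed in \cite{KU2}, then perform a generating-function manipulation to isolate $\hn_{\bar R}(z)$.

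Since \cite{KU2} gives $\grade J(\rho) \le t$ in general and our hypothesis supplies $\grade J \ge t$, equality holds; hence $J$ is perfect of grade $t$ and $\mathcal{D}^0(\rho)$ is a length-$t$ resolution of $\bar R$ by free $R$-modules. Because the entries of $\rho$ are homogeneous linear forms, each twist in $\mathcal{D}^0(\rho)$ is determined combinatorially by the format of the resolution. Writing $F_i = \bigoplus_j R(-j)^{\beta_{i,j}}$ for the $i$th term, additivity of Hilbert series yields
$$\HS_{\bar R}(z) = \HS_R(z)\cdot P(z), \qquad P(z) = \sum_{i=0}^{t}(-1)^i\sum_j \beta_{i,j}\, z^j.$$

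For part (\ref{Main-T-a}), I would extract the ranks $\beta_{i,j}$ from the format of $\mathcal{D}^0(\rho)$ in \cite{KU2} and verify $P(z) = (1-z)^t\cdot \hn_{\bar R}(z)$ using the basic identity $1/(1-z)^t = \sum_{i\ge 0}\binom{t+i-1}{i}z^i$. I expect the ``Eagon--Northcott-like'' portion of $\mathcal{D}^0(\rho)$ to produce, after division by $(1-z)^t$, exactly the first sum $\sum_{i=0}^{n-1}\binom{t+i-1}{i}z^i$ of $\hn_{\bar R}(z)$, while the ``dual'' tail of the resolution --- whose presence is forced by the grade-three Gorenstein self-duality --- contributes the subtracted terms; the fact that those subtractions begin at $i = \lfloor (n+1)/2\rfloor$ should match the degrees in which the tail starts. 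Reconciling the two pieces of $P(z)/(1-z)^t$ with the two sums in $\hn_{\bar R}(z)$ is the main obstacle, and I would handle it by separating the contributions indexed by $i < \lceil n/2\rceil$ from those with $i \ge \lceil n/2\rceil$ and matching term by term.

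For part (\ref{Main-T-b}), since $J$ is perfect of grade $t$, $\dim \bar R = \dim R - t$. Writing $\HS_R(z) = g(z)/(1-z)^{\dim R}$ with $g(1) = e_R$, part (\ref{Main-T-a}) gives $\HS_{\bar R}(z) = g(z)\hn_{\bar R}(z)/(1-z)^{\dim \bar R}$, so $e_{\bar R} = e_R\cdot \hn_{\bar R}(1)$. It then remains to prove the purely combinatorial identity
$$\sum_{i=0}^{n-1}\binom{t+i-1}{i} - \sum_{i=\lfloor(n+1)/2\rfloor}^{n-1}\binom{t+2i-n-1}{2i-n} = \sum_{k=0}^{\lfloor(n-1)/2\rfloor}\binom{n-2-2k+t}{t-1}.$$
I would reindex the subtracted sum by $j = 2i - n$, then apply the hockey-stick identity $\sum_{i=0}^{N}\binom{t+i-1}{i} = \binom{t+N}{N}$ to telescope the remaining differences into the right-hand side. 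The combinatorial restatement is then immediate: $\binom{n-2-2k+t}{t-1} = \binom{(n-1-2k)+t-1}{t-1}$ counts monomials of degree exactly $n-1-2k$ in $t$ variables, and as $k$ ranges over $0,\ldots,\lfloor (n-1)/2\rfloor$ these give precisely the degrees in $\{0,1,\ldots,n-1\}$ whose parity agrees with $n-1$, i.e.\ those monomials $m$ with $\deg m + n$ odd.

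Finally, part (\ref{Main-T-c}) is a direct specialization: when $R$ is a standard graded polynomial ring over a field, $g(z) = 1$ and $e_R = 1$, so $\hn_{\bar R}(z)$ is literally the $h$-polynomial of $\bar R$ and the multiplicity formula from (\ref{Main-T-b}) collapses to the claimed sum.
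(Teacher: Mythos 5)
Your overall strategy coincides with the paper's: read the twists off the resolution $\mathcal D^0(\rho)$ of \cite{KU2}, write $\HS_{\bar R}(z)=\HS_R(z)P(z)$, and reduce everything to the polynomial identity $P(z)=(1-z)^t\hn_{\bar R}(z)$. Your treatment of (b) and (c) is also essentially the paper's (evaluate $\hn_{\bar R}$ at $z=1$, use the hockey-stick identity, and interpret $\binom{n-2-2k+t}{t-1}$ as a count of monomials of degree $n-1-2k$). The problem is part (a): the identity $P(z)=(1-z)^t\hn_{\bar R}(z)$ is precisely the hard content of the theorem, and you have not proved it. Saying you would ``verify'' it and ``match term by term'' conceals a family of nontrivial binomial identities. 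Concretely, for each $z$-degree $I$ with $n+1\le I\le n+t-1$ one must show that
$$\sum_{i}\binom{I-1}{n-i-1}\binom{i+I-n-1}{i}\binom{t}{I-i}
=\sum_{i\le n-1}(-1)^{n+1-i}\binom{t+i-1}{i}\binom{t}{I-i}
-\sum_{i\le n-1}(-1)^{n+1-i}\binom{t+2i-n-1}{2i-n}\binom{t}{I-i},$$
a triple-product three-sum identity that does not telescope and does not follow from the geometric-series expansion of $1/(1-z)^t$ alone. The paper devotes all of Section~3 to it: an auxiliary two-parameter family $Q(w,t,I,\a)$ is introduced, a first-difference recurrence in $t$ is established (Lemma~\ref{L3.3}), and the base case $t=0$ requires a further page of delicate manipulation (Lemma~\ref{L3.4}). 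Only the coefficients in degrees $I\le n$ admit the quick generating-function argument via $(1-z^2)^t/(1-z)^t=(1+z)^t$ (Lemma~\ref{proveSM1}).

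Your structural heuristic is also misleading: the terms of $\mathcal D^0(\rho)$ in homological degree $N\ge 2$ all contribute in the single $z$-degree $N+n-1>n$, while $\hn_{\bar R}(z)$ has degree $n-1$; so there is no piecewise correspondence in which the ``Eagon--Northcott-like portion'' yields the first sum and a ``dual tail'' yields the second. Both sums in $\hn_{\bar R}(z)$ contribute to every coefficient of $(1-z)^t\hn_{\bar R}(z)$, and the comparison must be made globally, coefficient by coefficient, against the alternating sum over all homological degrees. Without supplying a proof of the resulting identities, the argument for (a) --- and hence for (b) and (c), which depend on it --- is incomplete.
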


We highlight the application of Theorem~\ref{Main-T} to residual intersections; see \ref{ri} for a definition of residual intersection.

\begin{corollary}\label{Main-C} Let $R_0$ be an Artinian local ring, $R=\bigoplus_{0\le i} R_i$ be a Cohen-Macaulay standard graded $R_0$-algebra, $I$ be a linearly presented grade three Gorenstein ideal in $R$ {\rm(}presented by an alternating matrix{\rm)}, $A$ be a sub-ideal of $I$ minimally generated generated by $t$ homogeneous elements for some $t$ with $3\le t$, $J$ be the ideal $A:_RI$, and $\bar R=R/J$.
Assume that  \begin{enumerate}[\rm(1)]
\item the homogeneous minimal generators of $A$ live in two degrees: $\init(I)$ and $\init(I)+1$, where $\init(I)$ is the least degree $r$ for which $I$ contains a non-zero element of degree $r$,
\item the minimal number of generators of $I/A$ is $n$ for some positive integer $n$, and 
\item the ideal $J$ is a $t$-residual intersection of $I$ {\rm(}that is, $t\le \grade J${\rm)}.
\end{enumerate}
If either
\begin{enumerate}[\rm(i)]
\item the ring $R$ is Gorenstein, or else,
\item the residual intersection $J=A:_RI$ is geometric,
\end{enumerate} then conclusions {\rm (\ref{Main-T-a}), (\ref{Main-T-b})}, and  {\rm(\ref{Main-T-c})} of the Theorem~{\rm \ref{Main-T}} give the Hilbert series and multiplicity of the ring $\bar R$.
\end{corollary}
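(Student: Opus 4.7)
The plan is to deduce Corollary~\ref{Main-C} from Theorem~\ref{Main-T} by invoking the structural result of \cite{KU2} mentioned in the introduction: under precisely the hypotheses of Corollary~\ref{Main-C}, the residual intersection $J=A:_RI$ coincides with $J(\rho)$ for a suitable almost alternating matrix $\rho$ of size $n\times(n+t)$.

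First I would apply that result from \cite{KU2} to produce an $n\times(n+t)$ almost alternating matrix $\rho$ with $J(\rho)=J$; the sizes $n$ and $t$ match because of hypothesis (2) and the defining data of $A$, respectively. The alternative conditions (i) and (ii) are the additional inputs that \cite{KU2} requires to carry out this reduction. At this point the only remaining hypothesis of Theorem~\ref{Main-T} that is not automatic is the linearity of the entries of $\rho$, since $t\le\grade J$ is hypothesis (3) and the graded structure is already in place (note that Theorem~\ref{Main-T} does not require $R$ to be Cohen--Macaulay or Gorenstein, so the extra ring-theoretic assumptions of Corollary~\ref{Main-C} are used only to invoke \cite{KU2}).

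Next I would verify that $\rho$ can be chosen so that every entry lies in $R_1$. Since $I$ is linearly presented, the alternating matrix $\phi$ presenting $I$ has linear entries, and the $n\times n$ alternating block of $\rho$ is built from $\phi$ and therefore inherits linearity. The remaining $t$ columns of $\rho$ record the coefficients expressing a minimal generating set of $A$ in terms of a minimal generating set of $I$; these coefficients have degree either $0$ or $1$, corresponding to the two possible degrees $\init(I)$ and $\init(I)+1$ of generators of $A$ permitted by hypothesis (1). The degree-zero coefficients, which arise from the generators of $A$ sitting in degree $\init(I)$, can be absorbed by replacing the chosen minimal generating set of $I$ by one that includes those particular degree-$\init(I)$ elements of $A$ and then doing column operations on $\rho$; after this base change every entry of $\rho$ belongs to $R_1$. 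This is the step I expect to be the principal obstacle, since it requires a careful compatibility between the choice of minimal generators and the construction of $\rho$ in \cite{KU2}, and it is precisely here that the two-degree hypothesis (1) enters in an essential way.

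With $\rho$ so chosen, Theorem~\ref{Main-T} applies and its conclusions (\ref{Main-T-a}), (\ref{Main-T-b}), and (\ref{Main-T-c}) transfer verbatim to $\bar R=R/J$, completing the proof.
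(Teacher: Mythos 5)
Your proposal is correct and follows essentially the same route as the paper: the paper also cites \cite[10.2]{KU2} to identify $J$ with $J(\rho)$ and then uses hypothesis (1) to arrange the matrix $Y$ in block form $\left[\begin{smallmatrix} Y'&0\\0&I_{\mu-n}\end{smallmatrix}\right]$ with $Y'$ linear, deleting the rows and columns corresponding to the identity block to pass from the $\mu\times(\mu+t)$ matrix to an $n\times(n+t)$ almost alternating matrix of linear forms to which Theorem~\ref{Main-T} applies. The step you flag as the principal obstacle (absorbing the degree-zero coefficients coming from generators of $A$ in degree $\init(I)$) is handled in the paper by exactly the mechanism you describe, namely choosing the minimal generators of $I$ compatibly and then shrinking the matrix.
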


\begin{Remark} The multiplicity calculation of Corollary~\ref{Main-C} is carried out for at least some residual intersections in \cite{KPU-BA}. Both calculations begin with the resolutions of \cite{KU2} and both calculations involve some manipulation of binomial coefficients. The present calculation gives the entire $h$-vector of $\bar R$ in addition to the multiplicity (which is the sum of the entries in the $h$-vector).\end{Remark}

\begin{chunk}\label{ri}Let $R$ be a commutative Noetherian ring,
$I$
an
ideal
in $R$, $t$ an integer with $\operatorname{ht} I\le t$, $A$ a proper subideal of $I$ 
which can be generated by $t$ elements, and $J$ the ideal $A :_R I$.  If 
$t\le \operatorname{ht} J$, then $J$ is called an $t$-{\it residual intersection} of $I$.  If, 
furthermore, $I_P = A_P$ for all prime ideals $P$ of $R$ with $I \subseteq P$ 
and $\operatorname{ht} P \leq t$, then $J$ is called a {\it geometric $t$-residual 
intersection} of $I$.
\end{chunk} 

\begin{chunk}\label{2.1}Let $\rho = [X\ Y]$  be an almost 
alternating matrix, with $X$ a square matrix. The alternating matrix which corresponds to $\rho$ is
$$ T = \left[ \begin{matrix}
X & Y \\ -Y^{\rm t} & 0 \end{matrix} \right] . $$
Let $J(\rho)$ be the ideal which is generated by the Pfaffians of all principal 
submatrices of $T$ which contain $X$.\end{chunk}

\section{The  proof of Theorem~\ref{Main-T} and Corollary~\ref{Main-C}.}

We begin by giving a bi-graded version of the complexes $\mathcal D^0(\rho)$ from \cite[Sect.2]{KU2}. The following result is not stated in \cite{KU2}; but it could be. We actually make no use of the bi-homogeneous version of $\mathcal D^0(\rho)$; on the other hand, no extra work is involved in calculating the bi-graded twists rather than only the graded twists.
\begin{lemma}\label{11bi-gr} Let $B$ be a bi-graded Noetherian ring, and $n$ and $t$ be positive integers, and 
  $\rho=\bmatrix X&Y\endbmatrix $ be an almost alternating $n\times (n+t)$ with $X$ an alternating matrix. Assume that each entry of $X$ is bi-homogeneous of degree $(-1,0)$ and each entry of $Y$ is bi-homogeneous of degree $(0,-1)$. 
 Then the complex 
 $\mathcal D^0(\rho)$
is $$0\to \mathcal D_t\to\mathcal D_{t-1}\to \dots \to \mathcal D_1\to \mathcal D_0,$$ 
with$$\mathcal D_N=\begin{cases}
B,&\text{if $N=0$},\vspace{5pt}\\
\bigoplus\limits_{i=0}^{\lfloor\frac n2\rfloor}B(-i,2i-n)^{\binom t{n-2i}},\vspace{5pt} 
&\text{if $N=1$},\\
\bigoplus\limits_{i=0}^{n-1}
B(-i,i+1-N-n)^{\binom{N+n-2}{n-i-1}\binom{i+N-2}i\binom{t}{N+n-1-i}}
,&\text{if $2\le N\le t$, and}\vspace{5pt}\\
0,&\text{otherwise}.\end{cases}$$
\end{lemma}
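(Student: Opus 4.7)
The plan is to take over the construction of $\mathcal{D}^0(\rho)$ from \cite{KU2} verbatim and to observe that, because every basis element of each $\mathcal{D}_N$ is a polynomial expression in the entries of $X$ and $Y$ with a fixed number of factors of each type, the bi-grading propagates automatically once the entries of $X$ and $Y$ are declared bi-homogeneous. Nothing in the construction depends on how $X$ and $Y$ are graded, so the entire content of the lemma is to identify, for each fixed $N$, the bi-degree of each basis element by counting its $X$-factors and $Y$-factors.

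I would begin with $N=1$, where the basis of $\mathcal{D}_1$ consists of the generators of $J(\rho)$, namely the Pfaffians of the principal submatrices $T_S$ of $T$ containing $X$, indexed by subsets $S \subseteq \{1,\dots,t\}$. For the Pfaffian to be nonzero, $n+|S|$ must be even, so $|S| = n-2i$ for some $i$ with $0 \le i \le \lfloor n/2 \rfloor$. Because the bottom-right block of $T$ is zero, in any nonvanishing term of $\operatorname{Pf}(T_S)$ each of the $n-2i$ indices in $S$ is forced to be matched with an index of $\{1,\dots,n\}$; the remaining $2i$ indices of $\{1,\dots,n\}$ then pair among themselves. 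Consequently every monomial appearing in $\operatorname{Pf}(T_S)$ uses exactly $i$ entries of $X$ and $n-2i$ entries of $Y$, producing the shift $B(-i, 2i-n)$; the factor $\binom{t}{n-2i}$ just counts the admissible subsets $S$.

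For the range $2 \le N \le t$, I would return to \cite{KU2}, where an explicit basis of $\mathcal{D}_N$ is given in terms of combinatorial data of three kinds: one indexing a selection of columns of $Y$ (accounting for the factor $\binom{t}{N+n-1-i}$) and two indexing multi-indices attached to the $X$-part of the construction (accounting for the product $\binom{N+n-2}{n-i-1}\binom{i+N-2}{i}$). Unwinding the formula, each basis element of the summand labelled by $i$ is the product of exactly $i$ entries of $X$ and $N+n-1-i$ entries of $Y$; combining the prescribed bi-degrees yields the shift $B(-i, i+1-N-n)$.

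The main obstacle will be this last verification for $N \ge 2$: it requires a careful reading of the construction in \cite{KU2} to confirm that the split of matrix entries into $i$ from $X$ and $N+n-1-i$ from $Y$ indeed holds for every basis element of $\mathcal{D}_N$ in the summand indexed by $i$. Once this bookkeeping is settled, bi-homogeneity of the differentials is automatic, since they too are polynomial in the entries of $X$ and $Y$, and the bi-graded complex is obtained from \cite{KU2} simply by decorating the free modules with the shifts computed above.
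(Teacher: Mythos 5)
The paper offers no proof of this lemma at all: it is asserted as a routine bi-graded decoration of the complex $\mathcal D^0(\rho)$ constructed in \cite{KU2} (``the following result is not stated in \cite{KU2}; but it could be''), so your plan --- import the construction verbatim and recompute each twist by counting $X$-factors and $Y$-factors --- is exactly the intended verification. Your treatment of $N=1$ is correct and complete: the vanishing of the lower-right block of $T$ forces every index of $S$ to be matched into $\{1,\dots,n\}$, so each term of $\operatorname{Pf}(T_S)$ with $|S|=n-2i$ uses precisely $i$ entries of $X$ and $n-2i$ entries of $Y$, yielding the twist $B(-i,2i-n)$ and the multiplicity $\binom{t}{n-2i}$. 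For $2\le N\le t$ you correctly identify that the only work is to read off, from the explicit indexing of the summands of $\mathcal D_N$ in \cite{KU2}, that the summand carrying the multiplicity $\binom{N+n-2}{n-i-1}\binom{i+N-2}{i}\binom{t}{N+n-1-i}$ is placed in bi-degree corresponding to $i$ factors from $X$ and $N+n-1-i$ from $Y$ (equivalently, that the differentials are bi-homogeneous of degree zero with these twists); you leave this bookkeeping as a promissory note, but since the paper itself supplies no more detail, this is not a divergence from its argument. One small point of phrasing: for $N\ge 2$ a basis element of $\mathcal D_N$ is not literally a product of matrix entries; the precise statement is that its bi-degree is forced by tracing any chain of nonzero entries of the differentials down to $\mathcal D_0$, which amounts to the same count.
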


\begin{corollary}If the hypotheses of the Theorem~{\rm\ref{Main-T}} are in effect, then 
\begin{equation}\label{1.2.1}\HS_{\bar R}(z)=\HS_{R}(z)\HN_{\bar R}(z),\end{equation}for
 \begin{equation}\label{HNRbar} \textstyle\HN_{\bar R}(z)= 1-\sum\limits_{I=\lfloor \frac{n+1}2\rfloor}^{n}
\binom t{2I-n}z^{I}+\sum\limits_{I=n+1}^{n+t-1}(-1)^{I-n+1}\sum\limits_{i=0}^{n-1} \binom{I-1}{n-i-1}\binom{i+I-n-1}i\binom{t}{I-i}z^{I}.\end{equation}
\end{corollary}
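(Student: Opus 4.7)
The plan is to read off $\HS_{\bar R}(z)$ directly from the free resolution provided by Lemma~\ref{11bi-gr}. Under the theorem's hypothesis $t\le \grade J$ together with the upper bound $\grade J(\rho)\le t$ recorded in the introduction (from \cite{KU2}), we have $\grade J=t$, which is precisely the grade-sensitivity condition that makes $\mathcal D^0(\rho)$ a free resolution of $\bar R$ over $R$. Specializing the bi-graded version of Lemma~\ref{11bi-gr} to the single grading (so that a bi-graded twist $B(-a,-b)$ contributes $z^{a+b}\HS_R(z)$ to the Hilbert series) gives a graded $R$-free resolution of $\bar R$ in which the twists are completely explicit.

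The computation then reduces to the standard additivity of Hilbert series along a free resolution:
\[
\HS_{\bar R}(z)=\sum_{N=0}^{t}(-1)^{N}\HS_{\mathcal D_N}(z).
\]
The $N=0$ term contributes $\HS_R(z)$. For $N=1$, each summand $B(-i,2i-n)$ carries total shift $i+(n-2i)=n-i$, so the substitution $I=n-i$ turns the $N=1$ contribution into
\[
-\HS_R(z)\sum_{I=\lfloor(n+1)/2\rfloor}^{n}\binom{t}{2I-n}z^{I}.
\]
For $2\le N\le t$, every summand of $\mathcal D_N$ carries total shift $i+(N+n-1-i)=N+n-1$, independent of $i$; reindexing by $I=N+n-1$ (so that $N+n-2=I-1$, $i+N-2=i+I-n-1$, and $N+n-1-i=I-i$, with the sign $(-1)^{N}=(-1)^{I-n+1}$) converts the remaining terms into exactly the double sum appearing in~\eqref{HNRbar}.

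Assembling the three contributions yields $\HS_{\bar R}(z)=\HS_R(z)\HN_{\bar R}(z)$ with $\HN_{\bar R}(z)$ as stated. The entire argument is bookkeeping: the only point that requires care is verifying that the complex of Lemma~\ref{11bi-gr} really is acyclic in our situation, and this is immediate from $\grade J=t$ together with the resolution theorem of \cite{KU2} cited in the introduction. No identities between binomial coefficients are needed at this stage; the combinatorial simplification that produces the compact polynomial $\hn_{\bar R}(z)$ of the theorem from the expression $\HN_{\bar R}(z)$ appearing here is deferred to the subsequent step of the proof of Theorem~\ref{Main-T}, and so is not an obstacle to the corollary itself.
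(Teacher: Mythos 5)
Your proposal is correct and follows essentially the same route as the paper: invoke the acyclicity of $\mathcal D^0(\rho)$ (the paper cites \cite[Thm.~8.3]{KU2} directly), take the alternating sum of Hilbert series of the free modules in Lemma~\ref{11bi-gr} with total twists $n-i$ for $N=1$ and $N+n-1$ for $2\le N\le t$, and reindex by $I=n-i$ and $I=N+n-1$ to obtain (\ref{HNRbar}).
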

\begin{proof}
The hypotheses of Theorem~\ref{Main-T}, together with \cite[Thm.~8.3]{KU2}, guarantee that the complex $\mathcal D^0(\rho)$  is a resolution of $\bar R$. One now reads from Lemma~\ref{11bi-gr} that (\ref{1.2.1}) holds with
$$ \textstyle\HN_{\bar R}(z)= 1-\sum\limits_{i=0}^{\lfloor\frac {n}2\rfloor}
\binom t{n-2i}z^{n-i}+\sum\limits_{N=2}^t(-1)^N\sum\limits_{i=0}^{n-1} \binom{N+n-2}{n-i-1}\binom{i+N-2}i\binom{t}{N+n-1-i}z^{N+n-1}.$$
Let $I=n-i$ in the first sum and $I=N+n-1$ in the second sum to obtain the stated formulation.
\end{proof}

\begin{proof1}
We prove  (\ref{Main-T-a}) by  showing that 
$\HN_{\bar R}(z)=(1-z)^t\hn_{\bar R}(z)$,
where $\HN_{\bar R}(z)$ and $\hn_{\bar R}(z)$ are the polynomials given in 
(\ref{HNRbar}) and (\ref{hnRbar}), respectively.
An easy calculation yields that $(1-z)^t\hn_{\bar R}(z)$  is equal to
$$\textstyle
-\sum\limits_{I=\lfloor\frac{n+1}2\rfloor}^{n+t-1}\Big(
\sum\limits_{i=\lfloor\frac{n+1}2\rfloor}^{n-1}(-1)^{I-i} \binom{t+2i-n-1}{2i-n}\binom{t}{I-i}\Big)z^{I}+\sum\limits_{I=0}^{n+t-1}\Big(\sum\limits_{i=0}^{n-1}(-1)^{I-i}\binom{t+i-1}i\binom t{I-i}\Big)z^I
;$$
consequently, it suffices to prove
\begin{equation}\label{ShowMe1}\textstyle 1-\sum\limits_{I=\lfloor \frac{n+1}2\rfloor}^{n}
\binom t{2I-n}z^{I} =  \begin{cases}
-\sum\limits_{I=\lfloor\frac{n+1}2\rfloor}^{n}\Big(
\sum\limits_{i=\lfloor\frac{n+1}2\rfloor}^{n-1}(-1)^{I-i} \binom{t+2i-n-1}{2i-n}\binom{t}{I-i}\Big)z^I\\
+\sum\limits_{I=0}^{n}\Big(\sum\limits_{i=0}^{n-1}(-1)^{I-i}\binom{t+i-1}i\binom t{I-i}\Big)z^{I}\end{cases}\end{equation}
and
\begin{equation}\label{ShowMe2}\textstyle \begin{array}{ll}&\sum\limits_{I=n+1}^{n+t-1}(-1)^{I-n+1}\sum\limits_{i=0}^{n-1} \binom{I-1}{n-i-1}\binom{i+I-n-1}i\binom{t}{I-i}z^{I}\vspace{5pt}\\=&\sum\limits_{I=n+1}^{n+t-1}\Big(-
\sum\limits_{i=\lfloor\frac{n+1}2\rfloor}^{n-1}(-1)^{I-i} \binom{t+2i-n-1}{2i-n}
\binom{t}{I-i} +\sum\limits_{i=0}^{n-1}(-1)^{I-i}\binom{t+i-1}i\binom t{I-i}
\Big)z^{I}.\end{array}\end{equation}
The equation (\ref{ShowMe1}) is established in Lemma~\ref{proveSM1}. Remove the variable $z$ from (\ref{ShowMe2}), multiply by $(-1)^{I+n+1}$, and remove the unnecessary constraints on the index $i$. (Indeed, $\binom ab$ is zero if $b$ is negative.) To establish (\ref{ShowMe2}), it suffices to show that if 
$1\le I$, then

\begin{equation}\label{ShowMe2'}
\textstyle 0=\begin{cases}-\sum\limits_{i\in \Z} \binom{I-1}{n-i-1}\binom{i+I-n-1}i\binom{t}{I-i}\vspace{5pt}\\+
\sum\limits_{i\le n-1}(-1)^{n-i} \binom{t+2i-n-1}{2i-n}
\binom{t}{I-i}\\ +\sum\limits_{i\le n-1}(-1)^{n+1-i}\binom{t+i-1}i\binom t{I-i}
.\end{cases}\end{equation}
The right side of (\ref{ShowMe2'}) is called $Q(n-1,t,I,0)$ in Definition~\ref{3.1}. It is shown in Proposition~\ref{This is it.} that $0=Q(n-1,t,I,0)$. The hypotheses ($0\le t$, $0\le n-1$, and  $1\le I$) of Proposition~\ref{This is it.} are satisfied by the present data.

To prove (\ref{Main-T-b}), it suffices to calculate
\begingroup\allowdisplaybreaks
\begin{align}
\hn_{\bar R}(1)&\textstyle=\sum\limits_{i=0}^{n-1} \binom{t+i-1}{i} -
\sum\limits_{i=\lfloor\frac{n+1}2\rfloor}^{n-1} \binom{t+2i-n-1}{2i-n}
\notag\\
&\textstyle=\binom{t+n-1}{n-1} -
\sum\limits_{i\le n-1} \binom{t+2i-n-1}{2i-n}.
\notag\\\intertext{Let $j=2i-n$ 
to obtain}\hn_{\bar R}(1)&\textstyle=\binom{t+n-1}{n-1} -
\sum\limits_{\gf{j\le n-2}{\text{$j+n$ even}}} \binom{t+j-1}{j}.
\label{sum}\end{align}\endgroup
The binomial coefficient $\binom{t+n-1}{n-1}$ is equal to the number of monomials of degree at most $n-1$ in $t$ variables and the sum on the right side of (\ref{sum})  
is the number of monomials $m$ of degree at most $n-2$ in $t$ variables with $\deg m+n$ even. The difference is the number of monomials $m$ of degree at most $n-1$ in $t$ variables with $\deg m+n$ odd.

Assertion (\ref{Main-T-c}) requires no further proof.
\qed\end{proof1}

\begin{lemma}\label{proveSM1} If $n$ and $t$ are positive integers, then {\rm(\ref{ShowMe1})} holds. 
\end{lemma}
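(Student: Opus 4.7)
My plan is to verify the polynomial identity (\ref{ShowMe1}) by matching the coefficient of $z^I$ on both sides for each $I$ with $0\le I\le n$. To that end, I will re-express the inner sums as coefficients of formal power series. Writing $A_I$ for the second inner sum on the right-hand side (with summand $(-1)^{I-i}\binom{t+i-1}{i}\binom{t}{I-i}$) and $B_I$ for the first (with summand $(-1)^{I-i}\binom{t+2i-n-1}{2i-n}\binom{t}{I-i}$), the right-hand side contributes $-B_I+A_I$ to the coefficient of $z^I$. Since $\sum_{i\ge 0}\binom{t+i-1}{i}z^i=(1-z)^{-t}$, the sum $A_I$ is the coefficient of $z^I$ in the product of $(1-z)^t$ with the degree-$(n-1)$ truncation of $(1-z)^{-t}$. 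Because $(1-z)^t(1-z)^{-t}=1$, this product equals $1$ minus a tail beginning at $z^n$; hence $A_0=1$, $A_I=0$ for $1\le I\le n-1$, and $A_n=-\binom{t+n-1}{n}$.

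The analysis of $B_I$ rests on a single binomial identity. Substituting $k=I-i$ and setting $m=2I-n$, one finds that $B_I=\sum_k(-1)^k\binom{t+m-2k-1}{m-2k}\binom{t}{k}$, where $k$ runs over $0\le k\le\lfloor m/2\rfloor$ when $\lfloor(n+1)/2\rfloor\le I\le n-1$, over that same range but missing $k=0$ when $I=n$, and over an empty range when $I<\lfloor(n+1)/2\rfloor$. The identity I need is
$$\sum_{k=0}^{\lfloor m/2\rfloor}(-1)^k\binom{t+m-2k-1}{m-2k}\binom{t}{k}=\binom{t}{m}\qquad (m\ge 0),$$
which I will prove by extracting the coefficient of $z^m$ from the factorization $(1+z)^t=(1-z^2)^t(1-z)^{-t}$, using $\binom{t+m-2k-1}{m-2k}=[z^{m-2k}](1-z)^{-t}$ and $(-1)^k\binom{t}{k}=[z^{2k}](1-z^2)^t$.

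With these computations in hand, the verification splits into four cases. For $I=0$ both sides equal $1$. For $1\le I<\lfloor(n+1)/2\rfloor$ both sides are zero, since the left-hand side is trivially zero and $A_I=B_I=0$. For $\lfloor(n+1)/2\rfloor\le I\le n-1$ the left-hand side is $-\binom{t}{2I-n}$ and the identity yields $-B_I+A_I=-\binom{t}{2I-n}+0$. The delicate case is $I=n$: here $B_n=\binom{t}{n}-\binom{t+n-1}{n}$ (the full sum minus its missing $k=0$ term), and this precisely cancels against $A_n=-\binom{t+n-1}{n}$ to give $-B_n+A_n=-\binom{t}{n}$, matching the left-hand side. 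The main obstacle I expect is tracking this boundary behaviour at $I=n$: both $A_n$ and $B_n$ deviate from their generic values, and the identity hinges on the fact that those deviations cancel exactly.
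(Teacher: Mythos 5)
Your proof is correct and follows essentially the same route as the paper's: both arguments rest on extracting coefficients from $(1-z)^t(1-z)^{-t}=1$ (to evaluate the $A_I$ sum) and from $(1-z^2)^t(1-z)^{-t}=(1+z)^t$ (to evaluate the $B_I$ sum). The only difference is bookkeeping at the boundary $I=n$: the paper extends both inner sums to $i=n$ at the outset, noting the two added terms are equal and cancel, whereas you keep the original limits and instead track the two resulting deviations $A_n=-\binom{t+n-1}{n}$ and $B_n=\binom{t}{n}-\binom{t+n-1}{n}$ and cancel them at the end.
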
\begin{proof}
The binomial coefficient $\binom t{I-i}$ is zero unless $i\le I$; consequently, if the upper limit for $i$ on the right side of (\ref{ShowMe1}) is changed from $n-1$ to $n$, the only value of $I$ which is affected is $I=n$, and, if $I=i=n$, then $\binom{t+i-1}i=\binom{t+2i-n-1}{2i-n}$. It suffices to show
\begin{equation}\label{240}\textstyle 1-\sum\limits_{I=\lfloor \frac{n+1}2\rfloor}^{n}
\binom t{2I-n}z^{I} =  \begin{cases}\phantom{+}\sum\limits_{I=0}^{n}\Big(\sum\limits_{i=0}^{n}(-1)^{I-i}\binom t{I-i}\binom{t+i-1}i\Big)z^I\\
-\sum\limits_{I=\lfloor\frac{n+1}2\rfloor}^{n}\Big(
\sum\limits_{i=\lfloor\frac{n+1}2\rfloor}^{n}(-1)^{I-i}\binom{t}{I-i} \binom{t+2i-n-1}{2i-n}
\Big)z^{I}.\end{cases}\end{equation}Observe next that 
\begin{equation}\label{ObserveNext}1=\sum\limits_{I=0}^{n}\Big(\sum\limits_{i=0}^{n}(-1)^{I-i}\binom t{I-i}\binom{t+i-1}i\Big)z^I.\end{equation}Indeed, the right side of (\ref{ObserveNext}) is equal to the first $n+1$ terms of the power series expansion of
$$1=(1-z)^t\frac{1}{(1-z)^t}.$$ Subtract (\ref{ObserveNext}) from (\ref{240}); multiply by $-1$; and look at one coefficient at a time.
Fix $I$ with $\lfloor \frac{n+1}2\rfloor\le I\le n$.
It suffices to show that \begin{equation}\label{progress}
\textstyle \binom t{2I-n} =  
\sum\limits_{i=\lfloor\frac{n+1}2\rfloor}^{n}(-1)^{I-i}\binom{t}{I-i} \binom{t+2i-n-1}{2i-n}.
\end{equation}
\noindent The right side of (\ref{progress}) is
\begingroup\allowdisplaybreaks\begin{align}
&\textstyle=\sum\limits_{i=\lfloor\frac{n+1}2\rfloor}^{I}(-1)^{I-i}\binom{t}{I-i} \binom{t+2i-n-1}{2i-n},\notag&&\textstyle \text{because $\binom{t}{I-i}=0$ if $I<i$, and $I\le n$,}\\
&\textstyle=\sum\limits_{j=0}^{\lfloor\frac{2I-n}2\rfloor}(-1)^{j}\binom{t}{j} \binom{2I-n-2j+ t-1}{2I-n-2j},\notag&&\text{with $j=I-i$.}
\end{align}\endgroup \noindent Thus, the right side of (\ref{progress}) is
\begingroup\allowdisplaybreaks\begin{align}
&\textstyle=\sum\limits_{j=0}^{\lfloor\frac{2I-n}2\rfloor}(\text{the coefficient of $z^{2j}$ in $(1-z^2)^t$})\cdot (\text{the coefficient of $z^{2I-n-2j}$ in $\frac 1{(1-z)^t}$})\notag\\
&\textstyle= \text{the coefficient of $z^{2I-n}$ in $\frac{(1-z^2)^t}{(1-z)^t}=(1+z)^t$}\notag\\
&\textstyle=\binom t{2I-n},\notag
\end{align}\endgroup
which is the left side of (\ref{progress}). \end{proof}

\begin{proof2}
The Corollary is an immediate consequence of  Theorem~\ref{Main-T} and \cite[10.2]{KU2}. We offer the following explanation of how the relevant $n\times (n+t)$ almost alternating matrix arises. Let $\mu$ be the minimal number of generators of $I$. One starts with a $\mu\times \mu$ matrix $X$ of linear forms which presents $I$ and a $\mu\times t$ matrix $Y$ which expresses the generators of $A$ in terms of the generators of $I$. One can arrange this data so that the matrix $Y$ has the form
$$Y=\bmatrix Y'&0\\0&I_{\mu-n}\endbmatrix,$$ where the entries of $Y'$ are linear forms and $I_{\mu-n}$ is the identity matrix with $\mu-n$ rows and columns.  The ideal $J$ is $J(\rho)$, where $\rho$ is the $\mu\times(\mu+t)$ almost alternating matrix $\bmatrix X&Y\endbmatrix$. The ideal $J$ is also $J(\rho')$, where $\rho'$ is the $n\times (n+t)$ almost alternating  matrix of linear forms which is obtained from $\rho$ by deleting the last $\mu-n$ rows and columns of $\rho$. \qed
\end{proof2}

\section{A family of identities}
\setcounter{equation}{0}
The calculations in this section are inspired by the Hilbert series calculations in \cite{KPU-ann}.

The identities \begin{equation}\label{crucial}Q(w,t,I,0)=0\end{equation}of Proposition~\ref{This is it.} are crucial to the proof of Theorem~\ref{Main-T}. The integers $Q(w,t,I,\a)$, with $1\le \a$, are introduced in order to prove (\ref{crucial}). We created $Q(*,t,*,\a+1)$ to be the first difference  
function $Q(*,t+1,*,\a)-Q(*,t,*,\a)$
of the the function $Q(*,t,*,\a)$. Fortunately, we are able to find a closed formula for $Q(*,t,*,\a+1)$ and thereby verify that this first difference  
function satisfies the desired initial condition $Q(*,0,*,\a+1)=0$.
\begin{definition}\label{3.1} For integers $w,t,I,\a$, with $0\le \a$, define $Q(w,t,I,\a)$ to be the integer
\begingroup\allowdisplaybreaks
\begin{align}
Q(w,t,I,0)&=\begin{cases}
-\sum\limits_{i\in\Z} \binom{I-1}{w-i}\binom{i+I-w-2}i\binom{t}{I-i}
\\
+
\sum\limits_{i\le w}(-1)^{i+w+1} \binom{2i-w-2+t}{2i-w-1}\binom t{I-i}
\\
+\sum\limits_{i\le w}(-1)^{i+w} \binom{t+i-1}{i}\binom t{I-i},
\end{cases}
&&\text
{if $\a=0$, and}\notag\\ 
Q(w,t,I,\a)&=
\begin{cases}
-\sum\limits_{i\in\Z} \binom{I-1}{w-i}\binom{i+I-w-2}i\binom{t}{I-i-\a}\\
+
\sum\limits_{i\le w}(-1)^{i+w+1+\a} \binom{2i-w-2+t+\a}{2i-w-1+\a}\binom {t}{I-i-\a}
\\
+\sum\limits_{i=1}^{\a-1}(-1)^{i+1}\binom{I-1}{w+i}\binom{t+w+i}{I-\a+i},
\end{cases}&&\text{
if $1\le \a$.}\notag\end{align}\endgroup\end{definition}

\begin{proposition}\label{This is it.} If  $w$, $t$, $I$, and $\a$ are integers with $\a$, $t$, $w+\a$, and $I-1$ all non-negative, then
$Q(w,t,I,\a)=0$.\end{proposition}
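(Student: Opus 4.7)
The plan is to prove $Q(w,t,I,\alpha)=0$ by induction on $t\ge 0$, with $w$, $I$, and $\alpha\ge 0$ (subject to $w+\alpha\ge 0$ and $I\ge 1$) universally quantified at each stage. The engine, anticipated by the discussion preceding Definition~\ref{3.1}, is the first-difference identity
\[Q(w,t+1,I,\alpha)-Q(w,t,I,\alpha)=Q(w,t,I,\alpha+1)\qquad(\alpha\ge 0).\]
Granted this identity together with the initial condition $Q(w,0,I,\alpha)=0$ for every admissible $\alpha\ge 0$, the inductive step is automatic: if $Q(w,t,I,\alpha)=0$ for a fixed $t$ and all admissible $\alpha\ge 0$, the identity gives $Q(w,t+1,I,\alpha)=0+0=0$ for all admissible $\alpha\ge 0$, advancing the induction.

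I would verify the first-difference identity by applying Pascal's rule $\binom{t+1}{k}=\binom{t}{k}+\binom{t}{k-1}$ to each $t$-dependent binomial in Definition~\ref{3.1}. The factor $\binom{t+1}{I-i-\alpha}$ splits as $\binom{t}{I-i-\alpha}+\binom{t}{I-i-(\alpha+1)}$, which is exactly what is needed to shift the lower index from $\alpha$ to $\alpha+1$ in the first two sums. The ``ascending'' binomials $\binom{2i-w-2+t+\alpha}{2i-w-1+\alpha}$ and, in the $\alpha=0$ case, $\binom{t+i-1}{i}$ and $\binom{2i-w-2+t}{2i-w-1}$ split similarly; after reindexing the residual pieces via $i\mapsto i\pm 1$, one checks that what remains after subtracting $Q(w,t,I,\alpha)$ assembles into the closed form of $Q(w,t,I,\alpha+1)$. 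Because the $\alpha=0$ formula carries two extra explicit summands in place of the tail $\sum_{i=1}^{\alpha-1}(-1)^{i+1}\binom{I-1}{w+i}\binom{t+w+i}{I-\alpha+i}$ appearing for $\alpha\ge 1$, the transition $\alpha=0\to 1$ has to be checked separately from the transitions $\alpha\ge 1\to\alpha+1$; the former must produce the empty tail, while the latter must produce exactly one new tail term, with the boundary contributions at the ends of the index ranges supplying the arithmetic.

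The base case $Q(w,0,I,\alpha)=0$ is a direct evaluation. At $t=0$ the factor $\binom{0}{I-i-\alpha}$ kills every term in the first two sums except $i=I-\alpha$; for $\alpha\ge 1$ the surviving second-sum contribution carries $\binom{2I-w-\alpha-2}{2I-w-\alpha-1}$, whose lower index exceeds its upper and which therefore vanishes (with a trivial boundary case handled separately). One is left with a single term from the first sum plus the $t$-independent tail, and the required equality reduces to a short Vandermonde-type manipulation; the $\alpha=0$ case is similar but shorter. The hard part will be the combinatorial bookkeeping in the Pascal expansion: tracking index shifts across the three summation ranges ($i\in\Z$ in the first sum, $i\le w$ in the second, $1\le i\le\alpha-1$ in the tail), handling the boundary contributions so that they match the closed form at level $\alpha+1$, and accounting cleanly for the structural difference between the $\alpha=0$ and $\alpha\ge 1$ formulas at the transition $\alpha=0\to 1$.
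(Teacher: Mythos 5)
Your plan coincides with the paper's proof: the authors establish exactly the recurrence $Q(w,t+1,I,\a)=Q(w,t,I,\a)+Q(w,t,I,\a+1)$ via repeated applications of Pascal's identity (treating $\a=0$ and $\a\ge 1$ separately, as you anticipate) and the initial condition $Q(w,0,I,\a)=0$ by direct evaluation, then conclude by induction on $t$. The only caveat is that the base case is heavier than your ``short Vandermonde-type manipulation'' suggests --- the surviving boundary terms do not simply vanish but must be cancelled against pieces of the $t$-independent tail, using an alternating-sum identity and a separate partial-sum lemma --- but this affects only the bookkeeping, not the strategy.
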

\begin{proof} Fix integers $w$, $I$, and $\a$ with $\a$, $w+\a$, and $I-1$ non-negative.
We show in Lemmas~\ref{L3.3} and \ref{L3.4}  that
\begin{align}
Q(w,t+1,I,\a)&=Q(w,t,I,\a)+Q(w,t,I,\a+1)\label{rr1}\\
Q(w,0,I,\a)&=0.\label{rr2}\end{align}
This recurrence relation now yields that $Q(w,t,I,\a)=0$ for all non-negative $t$. \end{proof}

\begin{lemma}\label{L3.3} If $w$, $t$, $I$, and $\a$ are integers  with $t$, $\a$, $w+\a$, and $I-1$ non-negative, then  equality holds in {\rm(\ref{rr1})}. 
\end{lemma}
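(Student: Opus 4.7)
The plan is to establish (\ref{rr1}) by applying Pascal's identity $\binom{t+1}{k}=\binom{t}{k}+\binom{t}{k-1}$ to every $t$-dependent binomial coefficient in $Q(w,t+1,I,\a)$ and matching the resulting expansion against $Q(w,t,I,\a)+Q(w,t,I,\a+1)$. I would decompose $Q(w,t,I,\a)$ into its three defining summands from Definition~\ref{3.1}, call them $Q_1$, $Q_2$, $Q_3$, and verify the recurrence piece by piece.

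For $Q_1$ only $\binom{t}{I-i-\a}$ depends on $t$, so Pascal delivers $Q_1(w,t+1,I,\a)-Q_1(w,t,I,\a)=Q_1(w,t,I,\a+1)$ at once. For $Q_3$, when $\a\ge 1$, only $\binom{t+w+i}{I-\a+i}$ is $t$-dependent; applying Pascal produces exactly the terms $i=1,\dots,\a-1$ of $Q_3(w,t,I,\a+1)$, so $Q_3(w,t+1,I,\a)-Q_3(w,t,I,\a)-Q_3(w,t,I,\a+1)=-B$, where $B:=(-1)^{\a+1}\binom{I-1}{w+\a}\binom{t+w+\a}{I-1}$ is the missing $i=\a$ term of $Q_3(w,t,I,\a+1)$.

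The crux is $Q_2$, whose two factors $\binom{2i-w-2+t+\a}{2i-w-1+\a}$ and $\binom{t}{I-i-\a}$ both depend on $t$. Setting $a=2i-w-1+\a$ and $b=I-i-\a$, a double application of Pascal expands $Q_2(w,t+1,I,\a)$ into four summands, one of which reproduces $Q_2(w,t,I,\a)$. Rewriting $Q_2(w,t,I,\a+1)$ in the same letters $a,b$ and expanding its leading factor via $\binom{t+a}{a+1}=\binom{t+a-1}{a+1}+\binom{t+a-1}{a}$, and using the iterated Pascal consequence $2\binom{t+a-1}{a}+\binom{t+a-1}{a-1}+\binom{t+a-1}{a+1}=\binom{t+a+1}{a+1}$, I would collapse the bookkeeping to
\[
Q_2(w,t+1,I,\a)-Q_2(w,t,I,\a)-Q_2(w,t,I,\a+1)=\sum_{i\le w}(-1)^{i+w+1+\a}\bigl(F_i+F_{i+1}\bigr),
\]
where $F_i:=\binom{t+a-1}{a-1}\binom{t}{b}$ and $F_{i+1}=\binom{t+a+1}{a+1}\binom{t}{b-1}$ is obtained from $F_i$ by the substitution $i\mapsto i+1$. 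Because the signs alternate, the right-hand side telescopes on $i\le w$ to the single boundary contribution $(-1)^{\a+1}F_{w+1}$ at $i=w+1$. The trinomial revision identity $\binom{p}{q}\binom{q}{r}=\binom{p}{r}\binom{p-r}{q-r}$ yields $F_{w+1}=\binom{t+w+\a}{w+\a}\binom{t}{I-1-w-\a}=\binom{I-1}{w+\a}\binom{t+w+\a}{I-1}$, so the boundary equals $B$ and exactly cancels the defect from $Q_3$.

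The case $\a=0$ follows the same plan: the role of $Q_3$ is played by the third summand of $Q(w,t,I,0)$, namely $\sum_{i\le w}(-1)^{i+w}\binom{t+i-1}{i}\binom{t}{I-i}$, whose Pascal expansion, after the analogous consolidation of three binomials into one ``next'' binomial, telescopes to $\binom{t+w}{w}\binom{t}{I-w-1}$; this precisely absorbs the $Q_2$ defect at $\a=0$. The main obstacle throughout is the $Q_2$ manipulation: one has to recognize that the three separate binomials produced by iterated Pascal consolidate into the single ``next'' binomial $\binom{t+a+1}{a+1}$, which exposes the shift $F_{i+1}$ and allows the sum to collapse to one explicit boundary term.
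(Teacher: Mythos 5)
Your proposal is correct and follows essentially the same route as the paper: the paper's proof likewise splits $Q$ into the same three blocks (its $S_1+S_3+S_6$, $S_2+S_4+S_7$, $S_5+S_8$ for $\a=0$, and the analogous triples for $1\le\a$), applies Pascal's identity, shifts the index $i\mapsto i+1$ to telescope each block to the boundary term at $i=w+1$, and cancels the two boundary contributions $\binom{t+w+\a}{w+\a}\binom{t}{I-w-\a-1}$ and $\binom{I-1}{w+\a}\binom{t+w+\a}{I-1}$ against each other. The one point worth making explicit is that this final cancellation (your trinomial revision) is precisely where the hypotheses $0\le t$, $0\le w+\a$, $0\le I-1$ enter: one should check separately the cases $I-w-\a-1<0$ and $t<I-w-\a-1$, where both products vanish, before invoking the factorial identity, as the paper does.
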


\begin{proof}We treat the cases $\a=0$ and $1\le \a$ separately. We begin with 
$\a=0$ and we  
compute \begin{equation}\label{alpha=0}Q(w,t,I,1)-Q(w,t+1,I,0)+Q(w,t,I,0).\end{equation}Write $$Q(w,t,I,1)=S_1+S_2,\  -Q(w,t+1,I,0)=S_3+S_4+S_5,\text{ and }Q(w,t,I,0)=S_6+S_7+S_8,$$ for
\begingroup\allowdisplaybreaks\begin{align}
S_1&\textstyle = -\sum\limits_{i\in\Z} \binom{I-1}{w-i}\binom{i+I-w-2}i\binom{t}{I-i-1},\notag\\
S_2&\textstyle =\sum\limits_{i\le w}(-1)^{i+w} \binom{2i-w-1+t}{2i-w}\binom {t}{I-i-1}
,\notag\\
S_3&\textstyle =\sum\limits_{i\in\Z} \binom{I-1}{w-i}\binom{i+I-w-2}i\binom{t+1}{I-i}
,\notag\\
S_4&\textstyle =-
\sum\limits_{i\le w}(-1)^{i+w+1} \binom{2i-w-1+t}{2i-w-1}\binom {t+1}{I-i}
,\notag\\
S_5&\textstyle =-\sum\limits_{i\le w}(-1)^{i+w} \binom{t+i}{i}\binom {t+1}{I-i},\notag\\
S_6&\textstyle =-\sum\limits_{i\in\Z} \binom{I-1}{w-i}\binom{i+I-w-2}i\binom{t}{I-i}
,\notag\\
S_7&\textstyle =
\sum\limits_{i\le w}(-1)^{i+w+1} \binom{2i-w-2+t}{2i-w-1}\binom t{I-i}
,\text{ and}\notag\\
S_8&\textstyle =\sum\limits_{i\le w}(-1)^{i+w} \binom{t+i-1}{i}\binom t{I-i}.
\notag\end{align}\endgroup
Apply the Pascal identity
\begin{equation}\label{PI}\textstyle \binom ab= \binom {a-1}b+\binom {a-1}{b-1},\end{equation}which holds for all integers $a$ and $b$, to see that $S_1+S_3+S_6=0$. Apply the Pascal identity three times: first to write  $S_4=S_4'+S_4''$ with 
\begin{align}
S_4'&\textstyle =-
\sum\limits_{i\le w}(-1)^{i+w+1} \binom{2i-w-1+t}{2i-w-1}\binom {t}{I-i-1}
\text{ and}\notag\\
S_4''&\textstyle =-
\sum\limits_{i\le w}(-1)^{i+w+1} \binom{2i-w-1+t}{2i-w-1}\binom {t}{I-i}
,\notag
\end{align} 
\noindent and then to obtain
\begin{align}
S_2+S_4'&\textstyle =\sum\limits_{i\le w}(-1)^{i+w} \binom{2i-w+t}{2i-w}\binom {t}{I-i-1}
\text{ and}\notag\\
S_4''+S_7&\textstyle =
\sum\limits_{i\le w}(-1)^{i+w} \binom{2i-w-2+t}{2i-w-2}\binom {t}{I-i}.
\notag
\end{align}
\noindent Replace the index $i$ in $S_4''+S_7$ by $i+1$ and then combine with $S_2+S_4'$. The result is
$$\textstyle S_2+S_4+S_7=\binom{t+w}w\binom t{I-w-1}.$$
Write $S_5=S_5'+S_5''$ with
\begin{align}
S_5'&\textstyle =-\sum\limits_{i\le w}(-1)^{i+w} \binom{t+i}{i}\binom {t}{I-i-1},\notag\\
S_5''&\textstyle =-\sum\limits_{i\le w}(-1)^{i+w} \binom{t+i}{i}\binom {t}{I-i},\notag\notag\end{align}
Observe that $$\textstyle S_5''+S_8=-\sum\limits_{i\le w}(-1)^{i+w} \binom{t+i-1}{i-1}\binom t{I-i}.$$ 
Replace the index $i$ in $S_5''+S_8$ by $i+1$ and then combine with $S_5'$. The result is
$$\textstyle S_5+S_8=-\binom{t+w}{w}\binom{t}{I-w-1}.$$Thus, (\ref{alpha=0}), which is equal to $\sum_{i=1}^8S_i$, is also equal to $S_1+S_3+S_6$ plus $S_2+S_4+S_7$ plus $S_5+S_8$, and this is zero. The assertion holds when $\alpha =0$. In this part of the argument, we did not need to use the conditions imposed on $t$, $w$, and $I$.  

 The $1\le \a$ part of the argument is similar, but with important differences.
We compute \begin{equation}\label{1-le-alpha} Q(w,t,I,\a+1)-Q(w,t+1,I,\a)+Q(w,t,I,\a).\end{equation}
Write $Q(w,t,I,\a+1)=\sum_{i=1}^3S_1$, $-Q(w,t+1,I,\a)=\sum_{i=4}^6 S_i$ and 
$Q(w,t,I,\a)=\sum_{i=7}^9 S_i$, as before. Observe that $S_1+S_4+S_7=0$, 
\begin{align}\textstyle S_2+S_5+S_8&=\textstyle  (-1)^{\a}\binom{w+t+\a}{w+\a}\binom t{I-w-\a-1},\text{ and}\notag\\
\textstyle S_3+S_6+S_9&=\textstyle(-1)^{\a+1}\binom{I-1}{w+\a}\binom{t+w+\a}{I-1}.\notag\end{align}
Thus, (\ref{1-le-alpha}) is equal to
\begin{equation}\label{zero?}\textstyle (-1)^\alpha \Big[\binom{w+t+\a}{w+\a}\binom t{I-w-\a-1}-\binom{I-1}{w+\a}\binom{t+w+\a}{I-1}\Big].\end{equation}

\noindent The hypotheses guarantee that $w+t+\a$, $w+\a$, $t$, and $I-1$ all  are  non-negative. Furthermore, we observe that if $I-w-\a-1<0$ or $t<I-w-\a-1$, then both summands in (\ref{zero?}) are zero. Henceforth, we may assume $0\le I-w-\a-1$ or $I-w-\a-1\le t$. In this case, each binomial coefficient $\binom ab$ in (\ref{zero?}) satisfies $0\le b\le a$; consequently, $\binom ab=\frac{a!}{b!(a-b)!}$ and a straightforward calculation shows that 
(\ref{zero?}) is zero.
\end{proof}

\begin{lemma}\label{L3.4} If $w$, $I$, and $\a$ are integers  with $\a$, $w+\a$, and $I-1$ non-negative, then  equality holds in {\rm(\ref{rr2})}. 
\end{lemma}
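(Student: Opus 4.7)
The plan is to substitute $t=0$ into the definition of $Q(w,0,I,\alpha)$ and exploit the collapse $\binom{0}{k}=0$ for $k\ne 0$. The argument splits on whether $\alpha=0$ or $\alpha\ge 1$.

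For $\alpha=0$, all three sums carry a factor $\binom{0}{I-i}$, which forces $i=I$. The $i=I$ contribution of the third sum vanishes since $\binom{I-1}{I}=0$ (recall $I\ge 1$), leaving at most two nonzero terms. I would carry out a short case analysis on where $w$ lies relative to $I$ and $2I-1$: both surviving terms vanish individually except in the borderline case $w=2I-1$, where they evaluate via $\binom{-1}{I}=(-1)^{I}$ to $-(-1)^{I}$ and $+(-1)^{I}$ and cancel.

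For $\alpha\ge 1$, the same collapse reduces the first two sums to their $i=I-\alpha$ terms,
\[
T_1 \,=\, -\binom{I-1}{w-I+\alpha}\binom{M}{I-\alpha}, \qquad T_2 \,=\, (-1)^{I+w+1}\binom{M}{M+1},
\]
where I write $M:=2I-\alpha-w-2$, and where the constraint $i\le w$ on the second sum is absorbed automatically by $\binom{M}{M+1}=0$ when $M\ge 0$. The third sum $T_3$ does not involve $t$, so it survives unchanged. I would first apply the trinomial-revision identity $\binom{a}{b}\binom{b}{c}=\binom{a}{c}\binom{a-c}{b-c}$ to each summand of $T_3$, rewriting it as $\binom{I-1}{I-\alpha+i}\binom{\alpha-1-i}{w-I+\alpha}$; then, after reversing the summation index and invoking the identity once more to factor out $\binom{I-1}{w-I+\alpha}$, what remains is an alternating sum of consecutive binomials. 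Collapsing it via the classical identity $\sum_{u=0}^{k}(-1)^{u}\binom{N}{u}=(-1)^{k}\binom{N-1}{k}$ (valid for every integer $N$ and $k\ge 0$ under the polynomial convention) yields
\[
T_3 \,=\, \binom{I-1}{w-I+\alpha}\Bigl[\binom{M}{I-\alpha} \,+\, (-1)^{\alpha}\binom{M}{I-1}\Bigr].
\]

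Summing the three pieces, the $\binom{M}{I-\alpha}$ contributions in $T_1$ and $T_3$ cancel, reducing the lemma to the single identity
\[
(-1)^{\alpha}\binom{I-1}{w-I+\alpha}\binom{M}{I-1} \,=\, (-1)^{I+w}\binom{M}{M+1}.
\]
I would verify this in three subcases: when $M\ge 0$, both sides vanish (the right because $M+1>M$; the left because the two binomials require respectively $I\ge w+\alpha+1$ and $I\le w+\alpha$, which are incompatible); when $M\le -2$, both sides vanish (the right because $M+1<0$; the left because then $w-I+\alpha>I-1$, killing the first factor); and when $M=-1$, i.e.\ $2I=w+\alpha+1$, both sides reduce to a single sign, and the parities agree because $w+\alpha=2I-1$ is odd. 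The main obstacle is the middle step: the double application of the trinomial-revision identity combined with the partial-sum identity, together with the careful bookkeeping needed when binomial coefficients with negative top-index are evaluated via their polynomial extension.
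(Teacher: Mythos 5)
Your proof is correct, and while it follows the same skeleton as the paper's proof --- at $t=0$ the factor $\binom{0}{I-i-\alpha}$ collapses the first two sums to the single terms $-\binom{I-1}{w+\alpha-I}\binom{M}{I-\alpha}$ and $(-1)^{I+w+1}\binom{M}{M+1}$, so that everything rides on a closed form for the surviving finite sum --- your treatment of that third sum is genuinely different. The paper completes it to a bilateral sum over all integers and subtracts two tails: the bilateral sum cancels against $S_2$ via the alternating Vandermonde identity $\sum_k(-1)^k\binom{b+k}{c+k}\binom{a}{k}=(-1)^a\binom{b}{a+c}$ of \cite[Lemma~1.3]{Ku92p}, the upper tail dies by an inequality argument, and the lower tail is evaluated by trinomial revision together with Lemma~\ref{little lemma}. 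You instead evaluate the finite sum directly, by two applications of trinomial revision, an index reversal, and one application of the same partial alternating-sum identity, arriving at $T_3=\binom{I-1}{w+\alpha-I}\bigl[\binom{M}{I-\alpha}+(-1)^{\alpha}\binom{M}{I-1}\bigr]$. I have checked this closed form: it equals $\binom{I-1}{w+\alpha-I}\binom{M}{I-w-2}$, and the two expressions agree in every case, the only delicate one being $M=-1$. Your route buys a shorter argument that dispenses with the bilateral identity and the three-way splitting; the one step that requires the care you already flag is the index reversal, since the symmetry $\binom{M+1}{k}=\binom{M+1}{M+1-k}$ it invokes is valid only for $M\ge -1$, so for $M\le -2$ one must observe separately that the prefactor $\binom{I-1}{w+\alpha-I}$ already vanishes. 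Your final three-subcase verification ($M\ge 0$, $M\le -2$, $M=-1$) is correct, and your explicit handling of $\alpha=0$ --- whose third sum is defined by a different formula and vanishes at $t=0$ because $\binom{I-1}{I}=0$ --- is in fact more scrupulous than the paper's write-up, which displays only the $\alpha\ge 1$ form of $Q$.
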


\begin{proof} We evaluate $Q(w,0,I,\a)=S_1+S_2+S_3$, with $$
\begin{array}{lll}
S_1&=&-\sum\limits_{i\in\Z} \binom{I-1}{w-i}\binom{i+I-w-2}i\binom{0}{I-i-\a},\\

S_2&=&\sum\limits_{i\le w}(-1)^{i+w+1+\a} \binom{2i-w-2+\a}{2i-w-1+\a}\binom {0}{I-i-\a},\text{ and}
\\
S_3&=&\sum\limits_{i=1}^{\a-1}(-1)^{i+1}\binom{I-1}{w+i}\binom{w+i}{I-\a+i}.
\end{array} $$The binomial coefficient $\binom{0}{b}$ is zero unless $b=0$. If ``$S$'' is a statement, then we write $\chi(S)$ to mean
$$\chi(S)=\begin{cases} 1&\text{if ``$S$'' is true}\\0&\text{otherwise.}\end{cases}$$ We see that
\begin{align}\textstyle S_1&\textstyle= -\binom{I-1}{w-I+\a} \binom{2I-\a-w-2}{I-\a}&\text{ and}\label{S1}\\
\textstyle S_2&=\textstyle \chi(I-\a\le w)(-1)^{I+w+1}\binom{2I-\a-w-2}{2I-\a-w-1}.\notag\end{align}The binomial coefficient $\binom{b-1}{b}$ is zero unless $b=0$. Furthermore, if $w=2I-\a-1$, then $I-\a\le w$ holds automatically because $0\le I-1$. 
It follows that \begin{equation}\label{S2}S_2=(-1)^{I+w+1}\chi(w=2I-\a-1).\end{equation}

Let $k=w+i$ in $S_3$ in order to obtain
$$\textstyle S_3=\sum\limits_{k=w+1}^{w+\a-1}(-1)^{k-w+1}\binom{I-1}{k}\binom{k}{I-\a-w+k}=S_3'+S_3''+S_3''',$$with

 $$\begin{array}{lll}
S_3'&=&\sum\limits_{k\in \Z}(-1)^{k-w+1}\binom{I-1}{k}\binom{k}{I-\a-w+k}\\
\\
S_3''&=&\sum\limits_{k\le w}(-1)^{k-w}\binom{I-1}{k}\binom{k}{I-\a-w+k}\\ \\
S_3'''&=&\sum\limits_{w+\a\le k}(-1)^{k-w}\binom{I-1}{k}\binom{k}{I-\a-w+k}.
\end{array}$$
For $S_3'$, we use the  identity 
$$ \textstyle \sum\limits_{k\in\Z}(-1)^{k}\binom{b+k}{c+k}
\binom{a}{k}=(-1)^{a}\binom{b}{a+c},$$   
 which holds for integers $a$, $b$, and $c$ with    $0\le
a$. (See, for example, \cite[Lemma~1.3]{Ku92p}.)
 Take $a=I-1$, $b=0$, and $c=I-\a-w$. We conclude that \begin{equation}\label{3'}\textstyle S_3'=(-1)^{w+I}\binom 0{2I-1-\alpha-w}=(-1)^{w+I}\chi(w=2I-1-\a)=-S_2,\end{equation}for $S_2$ given in (\ref{S2}).
In $S_3'''$, the ambient hypothesis guarantees that $0\le I-1$. If the term corresponding to $k$ is non-zero, then 
$$0\le k\le I-1<I\le I-\a-w+k\le k.$$ The inequality $k<k$ never occurs; consequently, \begin{equation}\label{3'''}S_3'''=0.\end{equation}

Notice that $S_3''$ is zero unless $0\le I-\a$. So,
$$\begin{array}{lll}\textstyle S_3''&=&\chi(0\le I-\a) \sum\limits_{k\le w}(-1)^{k-w}\binom{I-1}{k}\binom{k}{I-\a-w+k}\\&=&\chi(0\le I-\a)\sum\limits_{k=\max\{0,w+\a-I\}}^{\min\{w,I-1\}}(-1)^{k-w}\binom{I-1}{k}\binom{k}{I-\a-w+k}.\end{array}$$
We next show that \begin{equation}\label{the proposed identity}\textstyle S_3''=\chi(0\le I-\a)\binom{I-1}{w+\a-I}\sum\limits_{k=\max\{0,w+\a-I\}}^{\min\{w,I-1\}}(-1)^{k-w}\binom{2I-1-\a-w}{I-1-k}.\end{equation} Observe  that
 if $w+\a-I<0$, then both sides of (\ref{the proposed identity}) are zero; and therefore, in order to establish (\ref{the proposed identity}), it suffices to prove that
\begin{equation}\label{concl}\textstyle \binom{I-1}{k}\binom{k}{I-\a-w+k}=\binom{I-1}{w+\a-I}\binom{2I-1-\a-w}{I-1-k}\end{equation} when 
\begin{equation}\label{hyp}\max\{0,w+\a-I\}\le k\le \min\{w,I-1\}\quad \text{and}\quad 0\le w+\a-I.\end{equation}
The hypotheses (\ref{hyp}) guarantee that each of the four binomial coefficients $\binom ab$ from (\ref{concl}) satisfies $0\le a\le b$; consequently each of these binomial coefficients is equal to $\frac{a!}{b!(a-b)!}$. At this point (\ref{concl}) can be established with no difficulty; and therefore, the equation (\ref{the proposed identity}) has been established.

Let $\ell=I-1-k$ in (\ref{the proposed identity}) to see  that 
$$\textstyle S_3''=\chi(0\le I-\a)\binom{I-1}{w+\a-I}\sum\limits_{\ell=\max\{0,I-1-w\}}^{\min\{2I-1-w-\a,I-1\}}(-1)^{I-1-w-\ell}\binom{2I-1-\a-w}{\ell}$$
The constraint $0\le \ell$ is not needed because $\binom{2I-1-\a-w}{\ell}=0$ when $\ell<0$. On the other hand, as we think about $S_3''$ we may as well assume $0\le w+\a-I$ (otherwise $S_3''=0$). It follows that $I-w-\a\le0$ and $2I-1-w-\a\le I-1$. Thus,
\begingroup\allowdisplaybreaks\begin{align}\textstyle S_3''&\textstyle =\chi(0\le I-\a)\binom{I-1}{w+\a-I}\sum\limits_{\ell=I-1-w}^{2I-1-w-\a}(-1)^{I-1-w-\ell}\binom{2I-1-\a-w}{\ell}=T_1+T_2,\notag\\\intertext{for}
 T_1&\textstyle=\chi(0\le I-\a)\binom{I-1}{w+\a-I}(-1)^{I-1-w}
\sum\limits_{\ell\le 2I-1-w-\a}
(-1)^{\ell}\binom{2I-1-\a-w}{\ell}\notag\\\intertext{and}
T_2&\textstyle=-\chi(0\le I-\a)\binom{I-1}{w+\a-I}(-1)^{I-1-w}
\sum\limits_{\ell\le I-2-w}(-1)^\ell\binom{2I-1-\a-w}{\ell}.\notag
\end{align}\endgroup

(The factor $\chi(0\le I-\a)$ is very important. If $I-\a$ is less than zero, then $S_3''$ is zero, but
$$\textstyle\binom{I-1}{w+\a-I}(-1)^{I-1-w}\Big[\sum\limits_{\ell\le 2I-1-w-\a}(-1)^{\ell}\binom{2I-1-\a-w}{\ell}
-\sum\limits_{\ell\le I-2-w}(-1)^\ell\binom{2I-1-\a-w}{\ell}\Big]$$ is not necessarily zero.)
Apply Lemma~\ref{little lemma} to see that 
$$\begin{array}{lll} T_1&=&
\chi(0\le I-\a)\binom{I-1}{w+\a-I}
(-1)^{I-\a}\binom{2I-w-\a-2}{2I-w-\a-1}\text{ and}\\ \\
T_2&=&
\chi(0\le I-\a)\binom{I-1}{w+\a-I}
\binom{2I-2-\a-w}{I-2-w}.
\end{array}$$

The binomial coefficient $\binom {b-1}b$ is zero unless $b=0$ and $\binom{I-1}{I-1}=1$, since $0\le I-1$; therefore,

\begin{equation}\label{S3''a}T_1=(-1)^{I-\a}\chi(0\le I-\a)
\chi(w=2I-1-\a).\end{equation}

The integer $T_2$ is zero unless $0\le I-2-w$. If $0\le I-2-w$, then
$$0\le (I-\a)+(I-2-w)=2I-\a-2-w$$ and $\binom{2I-2-\a-w}{I-2-w}=\binom{2I-2-\a-w}{I-\a}$. Thus,
$$\textstyle  T_2=
\chi(0\le I-\a)\chi(0\le I-2-w)\binom{I-1}{w+\a-I}
\binom{2I-2-\a-w}{I-\a}.
$$ The factor  $\binom{2I-2-\a-w}{I-\a}$ of $T_2$ makes the factor $\chi(0\le I-\a)$ redundant; hence,
\begingroup\allowdisplaybreaks\begin{align}
T_2&=\textstyle \phantom{-}\chi(0\le I-2-w)\binom{I-1}{w+\a-I}
\binom{2I-2-\a-w}{I-\a}.\notag\\ \intertext{Recall the integer $S_1$ from (\ref{S1}) and observe that}
S_1+T_2&=\textstyle-\chi(I-1\le w) \binom{I-1}{w+\a-I}\binom{2I-2-\a-w}{I-\a}.\notag\\\intertext{Apply the identity $\binom ab=(-1)^b\binom {b-a-1}b$, which holds for all integers $a$ and $b$, to write}
 S_1+T_2&=\textstyle -(-1)^{I-\a}\chi(I-1\le w) \binom{I-1}{w+\a-I}\binom{w+1-I}{I-\a}.\notag
\end{align}\endgroup
 If $S_1+T_2$ is non-zero, then  $0\le I-1$ and $0\le w+1-I$. Thus, if $S_1+T_2$ is non-zero, then 
$$w+\a-I\le I-1\quad\text{and}\quad I-\a\le w+1-I;$$ 
$$w+\a-I\le I-1\le w+\a-I;$$ and $w=2I-\a-1$. We have shown that
\begingroup\allowdisplaybreaks \begin{align}S_1+T_2&=\textstyle -(-1)^{I-\a}\chi(w=2I-\a-1)\chi(I-1\le w) \binom{I-1}{w+\a-I}\binom{w+1-I}{I-\a}\notag\\
&\textstyle =-(-1)^{I-\a}\chi(w=2I-\a-1)\chi(\a\le I) \binom{I-1}{I-1}\binom{I-\a}{I-\a}\notag\\
&=-(-1)^{I-\a}\chi(w=2I-\a-1)\chi(\a\le I) .\notag\\\intertext{Recall the value of $T_1$ from (\ref{S3''a}). We have shown that}
 S_1+T_2&=-T_1.\label{the last three} 
\end{align}\endgroup
Combine (\ref{the last three}), (\ref{3'}), and (\ref{3'''}) to see that
 $Q(w,0,I,\a)$,
which is equal to
$$(S_1+T_1+T_2)+(S_2+S_3')+S_3''' ,$$ is zero. 
\end{proof}

\begin{lemma}\label{little lemma} If $a$ and $b$ are integers, then
$$\sum\limits_{\ell\le b}(-1)^{\ell}\binom a\ell=(-1)^b\binom{a-1}b.$$
\end{lemma}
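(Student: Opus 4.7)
The plan is to prove this classical binomial identity by a one-line telescoping argument based on the Pascal identity (\ref{PI}).

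First, I handle the trivial case $b<0$ separately: the left-hand side is an empty sum (under the standard convention $\binom{a-1}{\ell}=0$ for $\ell<0$ that is used throughout the paper), and the right-hand side is zero because $\binom{a-1}{b}=0$. So henceforth I may assume $0\le b$ and write the left-hand side as $\sum_{\ell=0}^{b}(-1)^\ell\binom a\ell$.

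Next, I apply Pascal's identity (\ref{PI}), $\binom a\ell=\binom{a-1}\ell+\binom{a-1}{\ell-1}$, multiply by $(-1)^\ell$, and rewrite the second term as $-(-1)^{\ell-1}\binom{a-1}{\ell-1}$. Setting $f(\ell)=(-1)^\ell\binom{a-1}\ell$, this gives
$$(-1)^\ell\binom a\ell=f(\ell)-f(\ell-1).$$
Summing over $0\le \ell\le b$ makes the sum telescope to $f(b)-f(-1)$. Since $\binom{a-1}{-1}=0$ by convention, $f(-1)=0$, and so the sum equals $f(b)=(-1)^b\binom{a-1}b$, which is exactly the right-hand side.

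There is essentially no obstacle here; the only subtlety is to make sure the conventions on binomial coefficients with negative lower index are used consistently (so that the boundary term $f(-1)$ vanishes and the $b<0$ case is covered). Since the identity is valid for all integers $a$ (positive, zero, or negative, with $\binom a\ell$ understood via the polynomial formula for $\ell\ge 0$), no hypothesis needs to be imposed on $a$, which matches the statement of the lemma.
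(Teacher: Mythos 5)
Your proof is correct and is essentially the paper's argument: the paper disposes of $b\le 0$ directly and then runs "a short induction" on $b$ using Pascal's identity (\ref{PI}), which is exactly your telescoping sum unrolled. The boundary observations you make (empty sum for $b<0$, vanishing of $f(-1)$, validity for all integers $a$) match the paper's remarks, so nothing further is needed.
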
\begin{proof}If $b<0$, then both sides are zero. If $b=0$, then both sides are $1$. A short induction completes the argument for $0<b$. The value of $a$ is not relevant; Pascal's identity (\ref{PI}) holds for all integers.\end{proof}

\end{document}